\documentclass[a4paper,12pt]{amsart}

\usepackage{amssymb}
\usepackage[normalem]{ulem}
\usepackage{color}

\setlength{\textheight}{23cm} \setlength{\topmargin}{0cm}
\setlength{\textwidth}{16cm} \setlength{\oddsidemargin}{0cm}
\setlength{\evensidemargin}{0cm}


\newcommand\field[1]{\mathbb{#1}}

\newcommand\ZZ{\field{Z}}
\newcommand\Bb{\mathcal B}

\newcommand\Tt{\mathcal T}

\newcommand\go{G^{(0)}}

\newcommand\interior{\operatorname{Int}}

\theoremstyle{plain}
\newtheorem{theorem}{Theorem}[section]
\newtheorem*{theorem*}{Theorem}
\newtheorem*{prop*}{Proposition}

\newtheorem{lemma}[theorem]{Lemma}

\theoremstyle{remark}
\newtheorem{rmk}[theorem]{Remark}

\newtheorem{example}[theorem]{Example}

\newtheorem*{ack}{Acknowledgements}

\theoremstyle{definition}

\newtheorem{dfn}[theorem]{Definition}

\newcommand{\iso}{\operatorname{Iso}}

\numberwithin{equation}{section}

\title{A groupoid formulation of the Baire Category Theorem}

\author{Jonathan Brown}
\author{Lisa Orloff Clark}

\address{Lisa Orloff Clark\\
Department of Mathematics and Statistics\\ 
University of Otago\\
 PO Box 56\\ Dunedin
Dunedin 9054\\
New Zealand.}
\email{lclark@maths.otago.ac.nz}

\address{Jonathan Brown\\
    Mathematics Department\\
    Kansas State University\\
138 Cardwell Hall\\
Manhattan, KS 66506-2602\\
USA.}
\email{brownjh@math.kansas.edu}

\date{September 12, 2012}

\begin{document}

\subjclass[2000]{03E25, 54E52, 22A22}

\keywords{Baire Category Theorem, topologically principal groupoid, effective groupoid, Principle of Dependent Choices}

\begin{abstract}
 We prove that the Baire Category Theorem  is equivalent to the following: 
Let $G$ be a topological groupoid such that the unit space is a complete metric space, and 
\item there is a countable cover of $G$ by neighbourhood bisections.
If $G$ is effective, then $G$ is topologically principal.
\end{abstract}

\maketitle

\section{Introduction}

\begin{theorem}[The Baire Category Theorem] (See, for example,  \cite[Theorem~7.7.2]{M}.)
\label{BCT}
 Suppose  $X$ is a complete metric space.  If $\{C_n\}$ is a countable collection 
of closed subsets of $X$, each with empty interior, then $\bigcup_n C_n$ has empty interior. 
\end{theorem}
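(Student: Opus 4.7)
The plan is to argue by contradiction: assume $\bigcup_n C_n$ has nonempty interior, so it contains some open ball $B(x_0,r_0)$ with $r_0 < 1$, and then construct a point of this ball that avoids every $C_n$. The construction proceeds by building a nested sequence of closed balls whose radii shrink to $0$ and whose centres form a Cauchy sequence; completeness of $X$ will supply the desired point.

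The inductive step is the heart of the argument. Suppose we have chosen $x_k \in X$ and $r_k > 0$ for $k \leq n$ so that $\overline{B(x_k,r_k)} \subseteq B(x_{k-1},r_{k-1}) \setminus C_k$ and $r_k < 1/k$. Since $C_{n+1}$ is closed with empty interior, the open set $B(x_n,r_n)$ is not contained in $C_{n+1}$, so there exists $x_{n+1} \in B(x_n,r_n) \setminus C_{n+1}$; because $C_{n+1}$ is closed and $B(x_n,r_n)$ is open, we can choose $r_{n+1}$ with $0 < r_{n+1} < 1/(n+1)$ small enough that $\overline{B(x_{n+1},r_{n+1})} \subseteq B(x_n,r_n) \setminus C_{n+1}$. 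Formalising the passage from finite stages to an infinite sequence is exactly the sort of recursive selection that invokes the Principle of Dependent Choices, which (consistent with the paper's keywords) I would flag explicitly rather than sweep under the rug.

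Given such a sequence, for $m,n \geq N$ both $x_m$ and $x_n$ lie in $B(x_N,r_N)$, so $d(x_m,x_n) < 2/N$; hence $(x_n)$ is Cauchy. Completeness of $X$ produces a limit $x \in X$. Since the balls are nested and closed, $x \in \overline{B(x_n,r_n)}$ for every $n \geq 1$, so $x \in B(x_0,r_0)$ and $x \notin C_n$ for any $n \geq 1$. This contradicts $B(x_0,r_0) \subseteq \bigcup_n C_n$, finishing the proof.

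The main obstacle, such as it is, is not any technical calculation but rather the choice at each stage of the recursion: one must be careful that each $x_{n+1}$ and $r_{n+1}$ can be chosen uniformly from the available data, and that the appeal to choice is honest. Everything else (the Cauchy estimate, the use of closedness of $C_n$, and the invocation of completeness) is routine once the nested sequence is in hand.
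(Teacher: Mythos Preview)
Your argument is the classical nested closed-balls proof and is correct; your explicit flagging of the Principle of Dependent Choices at the recursive step is appropriate and matches the paper's emphasis on the role of DC.

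However, the paper does not itself give this direct proof of Theorem~\ref{BCT}; it simply cites Munkres for it. The only argument in the paper that \emph{derives} Theorem~\ref{BCT} is the converse direction of Theorem~\ref{main}, where BCT is obtained from Theorem~\ref{GET}: given $X$ and $\{C_n\}$, the authors build the $\ZZ_2$ group-bundle groupoid $G$ of Example~\ref{ex: main}, exhibit the sets $B_n=(X\setminus C_n)\cup\{\gamma_x:x\in C_n\}$ as a countable cover by neighbourhood bisections, check that $G$ is effective, and then invoke Theorem~\ref{GET} to conclude $G$ is topologically principal, i.e.\ $\bigcup_n C_n$ has empty interior. So your route is the standard metric-space argument that uses completeness and DC directly, while the paper's route is deliberately indirect---it packages the Baire data into a groupoid precisely to establish the equivalence that is the point of the paper. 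Your proof is more elementary and self-contained; the paper's derivation buys the reverse implication in Theorem~\ref{main} and hence the equivalence with DC via the groupoid formulation.
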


The proof of the Baire Category Theorem, originally formulated by Baire in the 1890's,
 requires a variant of the Axiom of Choice  \cite[Chapter 13]{D}.
In fact, \cite{Blair} and \cite{Goldblatt} show the Baire Category Theorem is equivalent to the \emph{the Principle of Dependant Choices} 
which says:
\begin{quote}
Suppose $X$ is a set and  $R \subseteq X \times X$ is a relation  such that for each $x \in X$, 
there exists $y \in X$ such that 
$(x,y) \in R$.  Then there is a sequence $\{x_n\} \subseteq X$ such that $(x_n,x_{n+1}) \in R$ for all $n$. 
\end{quote}
The Principle of Dependent Choices falls strictly between the 
Countable Axiom of Choice and
the Axiom of Choice; see \cite{jech} for more details. 

In this note, we show the Baire Category Theorem is equivalent to Theorem~\ref{GET} below, which is a surprising result 
about \emph{effective groupoids}.  
We discovered a version of Theorem~\ref{GET} in our 
study of simple groupoid $C^*$-algebras \cite[Lemma~3.3]{BCFS}.  A version also appears in \cite[Proposition~3.6]{Renault:IMSB08} in the 
context of
maximal abelian subalgebras of $C^*$-algebras.  That Theorem~\ref{GET} implies the Baire Category Theorem is entirely new; but even our proof 
that the Baire Category Theorem implies Theorem~\ref{GET} is different from  those in \cite{BCFS} and \cite{Renault:IMSB08}.

\section{Preliminaries} 
A groupoid $G$ is a small category in which every morphism is invertible.  
We identify the set of objects of $G$ with the set of identity morphisms and denote this set 
$\go$. For $\gamma\in G$, we denote the range and source (domain) of $\gamma$ by $r(\gamma)$ and $s(\gamma)$ respectively.  
Thus $r,s: G\to \go\subseteq G$.  We define $G^{(2)}:=\{(\gamma,\eta)\in G\times G: r(\eta)=s(\gamma)\}$; $G^{(2)}$ 
consists of precisely those pairs of morphisms that can be composed in $G$.
For any $x\in \go$, the \emph{isotropy group} at $x$ is the group \[xGx:=\{\gamma\in G: r(\gamma)=s(\gamma)=x\}.\]  
The \emph{isotropy subgroupoid} of $G$ is  
\[\operatorname{Iso}(G):=\bigcup_{x\in \go} xGx\]
which is itself a groupoid.   If $B \subseteq G$, then we also write $\iso(B) := \iso(G) \cap B$.
We say $G$ is a \emph{group bundle} if $\operatorname{Iso}(G)=G$.

A groupoid $G$ is  a \emph{topological groupoid} if $G$ is equipped with a topology so that composition 
and inversion are continuous.  In this case, $r$ and $s$ are continuous maps. If $\go$ is Hausdorff, then the continuity of 
$r$ and $s$ implies that $\iso(G)$ is a closed subset of $G$.

An open set $A\subseteq G$ is called  an \emph{open bisection} if $r(A)$ and $s(A)$ are open in $G$ and
 $r$ and $s$ restricted
to $A$ are homeomorphisms onto their image; in particular $r$ and $s$ are injective on $A$.  

We say a groupoid $G$ is \emph{topologically principal} if the set $\{x\in \go: xGx\ \neq \{x\}\}$  has empty interior in $\go$.
A groupoid $G$ is \emph{effective} if 
$\operatorname{Iso}(G) - \go$ has empty interior.  

\section{When does effective imply topologically principal?}
In \cite[Proposition~3.6(ii)]{Renault:IMSB08} Renault considers effective groupoids whose unit spaces are `Baire'.  
We can interpret Renault's result as saying that Theorem~\ref{BCT}  implies the following:
\begin{theorem} \cite[Proposition~3.6(ii)]{Renault:IMSB08}
\label{thm: effect ren}
Suppose  $G$ is a topological groupoid such that:
 \begin{enumerate}
 \item the unit space is a complete metric space; 
 \item \label{it:2ren} $G$ has a countable cover consisting  of open bisections.
 \end{enumerate}
 If $G$ is effective, then $G$ is topologically principal.  
 \end{theorem}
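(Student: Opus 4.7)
My plan is to argue by contraposition: assuming $G$ is not topologically principal, I will produce a nonempty open subset of $G$ contained in $\iso(G) \setminus \go$. So let $W_0 \subseteq \go$ be a nonempty open subset of $U := \{x \in \go : xGx \neq \{x\}\}$, and let $\{B_n\}$ be the given countable cover of $G$ by open bisections.

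The key construction is to linearise each bisection. Because $r|_{B_n}$ is a homeomorphism onto the open set $r(B_n) \subseteq \go$, the map $f_n := s \circ (r|_{B_n})^{-1} : r(B_n) \to \go$ is continuous, and for $\gamma \in B_n$ one has $\gamma \in \iso(G)$ if and only if $r(\gamma) \in N_n := \{x \in r(B_n) : f_n(x) = x\}$. Since $\go$ is Hausdorff, $N_n$ is relatively closed in the open set $r(B_n)$. Set $M_n := B_n \cap \go$, which is open in $\go$. I claim $U \subseteq \bigcup_n (N_n \setminus M_n)$: for $x \in U$, pick $\gamma \in xGx$ with $\gamma \neq x$ and an index $n$ with $\gamma \in B_n$; then $f_n(x) = x$ so $x \in N_n$, while $x \in M_n$ would force injectivity of $r|_{B_n}$ to give $\gamma = x$, which it does not.

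Next I apply the Baire Category Theorem to the complete metric space $\go$: the closed sets $\overline{N_n \setminus M_n}$ cover the nonempty open set $W_0$, so some $\overline{N_n \setminus M_n}$ has nonempty interior, yielding an open $W$ with $\emptyset \neq W \subseteq \overline{N_n \setminus M_n}$. The main obstacle is the final step, namely passing from this set in the closure back inside $N_n \setminus M_n$ itself; I handle it in two parts. First, $\go \setminus M_n$ is closed (as $M_n$ is open) and contains $N_n \setminus M_n$, hence contains the closure, giving $W \cap M_n = \emptyset$. Second, pick any $x \in W$: since $x \in \overline{N_n \setminus M_n}$ there is a sequence in $N_n \setminus M_n \subseteq r(B_n)$ converging to $x$ that eventually enters the open neighbourhood $W$, so $W \cap r(B_n) \neq \emptyset$; and because $N_n$ is relatively closed in the open set $r(B_n)$, the open set $W \cap r(B_n) \subseteq \overline{N_n} \cap r(B_n)$ must already lie in $N_n$, and therefore in $N_n \setminus M_n$. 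The set $(r|_{B_n})^{-1}(W \cap r(B_n))$ is then a nonempty open subset of $B_n$ contained in $\iso(G)$ and disjoint from $\go$, contradicting effectiveness.
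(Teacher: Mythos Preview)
Your proof is correct and follows essentially the same strategy as the paper, which deduces this statement from the more general Theorem~\ref{GET} via Lemma~\ref{lem:main}: your sets $N_n \setminus M_n$ are exactly the paper's $r(\iso(B_n) - \go)$, and your passage from $W \subseteq \overline{N_n \setminus M_n}$ back to $W \cap r(B_n) \subseteq N_n \setminus M_n$ (using relative closedness of $N_n$ in $r(B_n)$ and openness of $M_n$), followed by pullback along the homeomorphism $r|_{B_n}$, is precisely the content of Lemma~\ref{lem:main}. The only difference is organizational---you argue contrapositively, invoking BCT first to isolate a single index $n$, whereas the paper first proves each $\overline{r(\iso(B_n) - \go)}$ has empty interior and then applies BCT.
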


Our original intention was to show that Theorem~\ref{thm: effect ren} is equivalent to  Theorem~\ref{BCT}.  However, we
eventually realised that such a result will only hold if we weaken the hypotheses of Theorem~\ref{thm: effect ren}.   
To see why, consider the class of effective groupoids constructed in Example~\ref{ex: main} below.   Each has the property that 
Theorem~\ref{BCT} implies it is topologically principal.  At the same time,  groupoids in this class  may not satisfy the hypotheses of 
Theorem~\ref{thm: effect ren}.  (We will also use this class of examples later in the proof of our main result.)

\begin{example}
\label{ex: main}
Let $X$ be a complete metric space and $\{C_n\}$ be a countable
collection of closed subsets of $X$, each with empty interior.  Define $C:=\bigcup_n C_n$.
 
Let $G$ be the group bundle with unit space $X$ and isotropy groups 
\begin{equation*}xGx := 
\begin{cases} \mathbb{Z}_2, & \text{ if $x \in C$;}\\
            \{ 0 \}, & \text{otherwise.}
\end{cases}
\end{equation*}
We identify the identity element $0 \in xGx$ with $x$.  
For each $x \in C$, we write $\gamma_x$ for the nontrivial element of $xGx$.   Notice that
\[G^{(2)} = \{(x,x) : x\in X\} \cup \{(\gamma_x, \gamma_x): x \in C\} \cup  \{(x, \gamma_x): x \in C\} \cup  \{(\gamma_x, x): x \in C\}.\]

To make $G$ into a topological groupoid, first let  $\Tt$ be the topology for $X$.  Define the collection
\[
 \Bb := \Tt \cup  \{V \subseteq G : V = (W - \{ x \}) \cup \{ \gamma_x\} \text{ for some } W \in \Tt \text{ and  } x \in C\cap W\}.
\]
The collection $\Bb$ forms a basis for a topology on $G$.  To see this, note that $\Bb$ covers $G$ and 
since $\Tt$ is the topology for $X$, it is easy to see that $U, V\in \Bb$ implies $U\cap V\in \Bb$.
We claim that $G$ endowed with the topology generated by $\Bb$ is a topological groupoid. Indeed, inversion
 is given by the identity and is thus continuous.
Now let $m: G^{(2)}\to G$ be the composition map.  Fix $V \in \Bb$.  If $V \in \Tt$, then
\[m^{-1}(V) = \{(x,x):x \in V\} \cup \{(\gamma_x, \gamma_x): x \in V\}.\]
If $V = (W - \{ y \}) \cup \{ \gamma_y\}$ for some $W \in \Tt$ and $y \in C \cap W$, then
\[m^{-1}(V) = \{(x,x):x \in W - \{y\} \} \cup \{(\gamma_y, y), (y, \gamma_y)\}\cup \{(\gamma_x,\gamma_x): x\in (W-\{y\})\cap C\} .\]
In both cases, it is straightforward to show that $m^{-1}(V)$ is open in $G^{(2)}$, hence composition is continuous
as claimed.  
 
Since every element of $\Bb$ intersects the unit space, 
the set $\operatorname{Iso}(G) - \go= G - \go$ contains no open sets, so $G$ is effective.   
By construction, $\go = X$ is a complete metric space and
\[
 C = \{x\in \go: xGx\ \neq \{x\}\},
\] 
so  Theorem~\ref{BCT} implies that $G$ is also topologically principal.
Notice that $G$ need not satisfy item~\eqref{it:2ren} of Renault's Theorem~\ref{thm: effect ren}.  
Indeed if $X=[0,1]$ and $C_n=C$ is the Cantor set for all $n$, there is no countable cover of 
$G$ consisting of open bisections.  To see this suppose $\{U_i\}$ is any countable open cover of 
$G$.  Since the Cantor set is uncountable there exists an $i_0$ such that 
$A_{i_0}:=\{x\in C: \gamma_x\in U_{i_0}\}$ is uncountable.  For each 
$x\in A_{i_0}$ pick a basis element $(V_x-\{x\})\cup \{\gamma_x\}$ contained in $U_{i_0}$.  
Since the standard basis for $[0,1]$ is given by connected intervals, we can assume that $V_x$ is connected.
For each $n \in \ZZ^+$ define $D_n:=\{x\in A_{i_0}: \text{~diameter of~} V_x\text{~is greater than~} 2/n\}$.  
Since $A_{i_0}$ is uncountable, there exists $n_0$ such that $D_{n_0}$ is uncountable.  Now consider 
the partition $\{P_m:=[m/2n_0, (m+1)/2n_0]\}$ of $[0,1]$ where $0 \leq m \leq 2n_0-1$.   
Since $D_{n_0}$ is uncountable, there 
exists an $m_0$ such that $D_{n_0}\cap P_{m_0}$ is uncountable. By the definition of $D_{n_0}$ this implies 
that for every $x\in D_{n_0}\cap P_{m_0}$ both $x$ and $\gamma_x$ are in $U_{i_0}$ and so $U_{i_0}$ 
is \emph{not} an open bisection.
\end{example}

While the groupoids considered above need not have a countable cover of 
open bisections, they do have a countable cover consisting
of `well-behaved sets'.  
We call these sets \emph{neighbourhood bisections}.  
(We denote the interior of a set $D$ by $\interior(D)$.)

\begin{dfn}
\label{def:nb}
 A set $B \subseteq G$ is called a \emph{neighbourhood bisection} 
if the following hold:
\begin{enumerate}
\item \label{it:nb_1} $B \subseteq \overline{\interior (B)}$;
\item \label{it:nb_2}$r|_B$ and $s|_B$ are injective;
\item \label{it:nb_3}$r(B)$ and $s(B)$ are open in $G$;
\item \label{it:nb_4}$\interior (B)$ is an open bisection;
\item \label{it:nb_5}$B - \interior(B) \subseteq \iso(B) - \go$.
\end{enumerate}
\end{dfn}

In the next section we prove the following theorem is equivalent to Theorem~\ref{BCT}.  One part of 
our proof involves showing  the class of groupoids constructed in Example~\ref{ex: main} do indeed
have a countable cover consisting of neighbourhood bisections. 

\begin{theorem}
 \label{GET}
Suppose  $G$ is a topological groupoid such that:  
\begin{enumerate}
\item \label{it:GET_1} the unit space is a complete metric space; 
\item \label{it:GET_2}  $G$ has a countable cover consisting  of neighbourhood bisections.
\end{enumerate}
If $G$ is effective, then $G$ is topologically principal.
\end{theorem}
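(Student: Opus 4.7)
The strategy is to invoke the Baire Category Theorem on the complete metric unit space $\go$. Observe first that
\[
\{x \in \go : xGx \neq \{x\}\} \;=\; s\bigl(\iso(G) - \go\bigr) \;=\; \bigcup_n s\bigl(\iso(B_n) - \go\bigr),
\]
where $\{B_n\}$ is the given countable cover by neighbourhood bisections. It suffices to show each $s(\iso(B_n) - \go)$ is nowhere dense in $\go$; the union is then meager, so the Baire Category Theorem forces it to have empty interior.

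Fix $n$ and put $A_n := \interior(B_n)$. Property~(\ref{it:nb_5}) gives $B_n - A_n \subseteq \iso(B_n) - \go$, whence
\[
\iso(B_n) - \go \;=\; \bigl(\iso(A_n) - \go\bigr) \,\cup\, (B_n - A_n).
\]
For the boundary piece $B_n - A_n$, injectivity of $s|_{B_n}$ (property~(\ref{it:nb_2})) makes $s(B_n - A_n)$ disjoint from $s(A_n)$, while $B_n \subseteq \overline{A_n}$ (property~(\ref{it:nb_1})) together with continuity of $s$ gives $s(B_n - A_n) \subseteq \overline{s(A_n)}$. Hence $s(B_n - A_n) \subseteq \overline{s(A_n)} - s(A_n)$, a closed nowhere dense set because $s(A_n)$ is open in $\go$ (property~(\ref{it:nb_3})). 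For the interior piece, property~(\ref{it:nb_4}) makes $s|_{A_n}$ a homeomorphism onto the open set $s(A_n)\subseteq\go$; effectiveness of $G$ forces $\iso(A_n) - \go$ to have empty interior in $A_n$, so its image has empty interior in $s(A_n)$. Using Hausdorffness of $\go$ (so that $\iso(A_n)$ is closed in $A_n$) together with openness of $A_n \cap \go$ in $A_n$, one checks that $s(\iso(A_n) - \go)$ is closed in $s(A_n)$. Any open $V \subseteq \go$ contained in its $\go$-closure then either meets $s(A_n)$, contradicting empty interior there, or lies in $\partial s(A_n)$, contradicting that $V$ is open; so the set is nowhere dense in $\go$.

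Combining the two pieces, each $s(\iso(B_n) - \go)$ is contained in a finite union of nowhere dense sets and is itself nowhere dense, so the Baire Category Theorem finishes the proof. The main subtlety is the upgrade from ``empty interior in $s(A_n)$'' to ``nowhere dense in $\go$'': empty interior is not preserved under closure in general, but closedness inside the open set $s(A_n)$ constrains any additional closure points to lie on the boundary $\partial s(A_n)$, which is itself nowhere dense.
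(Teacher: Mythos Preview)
Your argument is correct and follows essentially the same strategy as the paper: reduce to showing each $s(\iso(B_n)-\go)$ (the paper uses $r$, which is immaterial) is nowhere dense in $\go$, then apply the Baire Category Theorem. The organizational difference is that you split $\iso(B_n)-\go$ into the interior piece $\iso(A_n)-\go$ and the boundary piece $B_n-A_n$ and handle them separately, whereas the paper treats the set as a whole: first it shows $r(\iso(B)-\go)$ has empty interior (any open $W$ inside it must meet $r(\interior B)$ by density, and the pullback via $r|_{\interior B}$ then contradicts effectiveness), and then a short lemma (your closedness step, packaged as Lemma~\ref{lem:1}) shows $r(\iso(B)-\go)$ is already closed in the open set $r(B)$, from which nowhere-denseness in $\go$ follows by exactly your ``upgrade'' argument. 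Your decomposition makes the role of the boundary $\partial s(A_n)$ explicit, while the paper's formulation hides it inside the single closedness lemma; both routes use the same ingredients (effectiveness, the homeomorphism on $\interior B$, closedness of $\iso(G)$, openness of $\go$).

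One small citation slip: when you assert $s(A_n)$ is open in $\go$ you cite property~(\ref{it:nb_3}), but that item concerns $s(B_n)$; openness of $s(A_n)=s(\interior B_n)$ comes from property~(\ref{it:nb_4}) (that $\interior B_n$ is an open bisection). This does not affect the argument.
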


\begin{rmk}
Suppose $G$ is a groupoid satisfying the hypotheses of Theorem~\ref{GET}, then $\go$ is open in $G$.  
To see this, let $\{B_n\}$ be a countable cover of $G$ by neighbourhood bisections, then $\go=\bigcup r(B_n)$
 which is open.
\end{rmk}

\begin{rmk}
An \emph{\'etale} groupoid is a topological groupoid that has a cover consisting 
of open bisections.  
When studying $C^*$-algebras associated to groupoids, one often considers second-countable, 
locally compact, Hausdorff groupoids that are  \'etale.
These groupoids satisfy the hypothesis of Theorem~\ref{GET}.
\end{rmk}

\begin{rmk}
Suppose $G$ is a topological groupoid.  If $r$ is an open map, then $G$ topologically principal implies $G$ is effective.  
See \cite[Examples~6.3 and 6.4]{BCFS} for examples of
groupoids (that do not satisfy the hypothesis of  Theorem \ref{GET}) that are effective but not topologically principal.
\end{rmk}
\section{Main Result}

\begin{theorem}
 \label{main}
Theorem~\ref{BCT} is equivalent to Theorem~\ref{GET}.
\end{theorem}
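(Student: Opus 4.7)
The approach is to prove both implications.

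For BCT $\Rightarrow$ GET, assume the hypotheses of Theorem~\ref{GET} with countable cover $\{B_n\}$ by neighbourhood bisections and $G$ effective. Set $U_n := \interior(B_n)$, an open bisection inducing the partial homeomorphism $\phi_n : s(U_n) \to r(U_n)$. By condition~\eqref{it:nb_5},
\[
\iso(B_n) - \go = (\iso(U_n) - \go) \cup (B_n - U_n),
\]
so the nontrivial-isotropy set $D := \{x \in \go : xGx \neq \{x\}\} = s(\iso(G) - \go)$ decomposes as $D = \bigcup_n \bigl((F_n - (U_n \cap \go)) \cup s(B_n - U_n)\bigr)$, where $F_n := \{x \in s(U_n) : \phi_n(x) = x\}$ is the fixed-point set (closed in $s(U_n)$). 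Any open subset of $\go$ contained in $F_n - (U_n \cap \go)$ would pull back via $s|_{U_n}^{-1}$ to an open subset of $\iso(G) - \go$, contradicting effectiveness; a short argument using openness of $U_n \cap \go$ and closedness of $F_n$ in $s(U_n)$ promotes this to nowhere-denseness of $F_n - (U_n \cap \go)$. For $s(B_n - U_n)$, condition~\eqref{it:nb_1} places $B_n - U_n$ inside $\overline{U_n}$, and injectivity of $s|_{B_n}$ (condition~\eqref{it:nb_2}) prevents $s(B_n - U_n)$ from meeting the open set $s(U_n)$, hence $s(B_n - U_n) \subseteq \overline{s(U_n)} - s(U_n)$, a closed nowhere dense set. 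Thus $D$ lies in a countable union of closed nowhere dense subsets of the complete metric space $\go$, and BCT gives $D$ empty interior.

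For GET $\Rightarrow$ BCT, given $X$ complete metric and $\{C_n\}$ closed with empty interior, form the groupoid $G$ of Example~\ref{ex: main} with $C := \bigcup_n C_n$. Then $G$ is effective, $\go = X$ is complete metric, and the nontrivial-isotropy set is $C$; if Theorem~\ref{GET} applies, its conclusion that $G$ is topologically principal is precisely the statement that $C$ has empty interior. The task is to exhibit a countable cover of $G$ by neighbourhood bisections. Take $B_0 := \go$ and, for $n \geq 1$,
\[
B_n := (X - C_n) \cup \{\gamma_x : x \in C_n\}.
\]
These cover $G$, and Definition~\ref{def:nb} is checked straightforwardly: $B_n$ contains at most one of $\{x, \gamma_x\}$ for each $x$, so $r|_{B_n}$ and $s|_{B_n}$ are injective; $r(B_n) = s(B_n) = X = \go$; every basic neighbourhood of $\gamma_x$ (for $x \in C_n$) meets $C_n - \{x\}$, so $\interior(B_n) = X - C_n$; because $C_n$ has empty interior, $X - C_n$ is dense in $X$ and each $\gamma_x$ is a limit in $G$ of a sequence from $X - C_n$, giving $B_n \subseteq \overline{\interior(B_n)}$; and $B_n - \interior(B_n) = \{\gamma_x : x \in C_n\} \subseteq \iso(B_n) - \go$.

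The main obstacles are twofold. In direction (BCT $\Rightarrow$ GET), the technical care is that one must produce \emph{closed} nowhere dense sets (not just sets with empty interior) in order to feed the Baire machinery, so the effectiveness argument has to be packaged together with the boundary contribution $\overline{s(U_n)} - s(U_n)$ using all five clauses of Definition~\ref{def:nb}. In direction (GET $\Rightarrow$ BCT), the real subtlety is that the sets $B_n$ above are not open bisections—the points $\gamma_x$ sit on their boundary—which is exactly why Theorem~\ref{GET} weakens Renault's open-bisection hypothesis to that of a neighbourhood bisection, and condition~\eqref{it:nb_1} succeeds precisely because $X - C_n$ is dense in $X$.
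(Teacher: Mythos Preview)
Your proof is essentially correct and follows the paper's strategy. In the forward direction you decompose $s(\iso(B_n)-\go)$ into the fixed-point piece $F_n-(U_n\cap\go)$ and the boundary piece $s(B_n-U_n)$, whereas the paper treats $r(\iso(B_n)-\go)$ as a whole via Lemmas~\ref{lem:1} and~\ref{lem:main}; the underlying ideas (effectiveness yields empty interior, conditions \eqref{it:nb_1}--\eqref{it:nb_5} upgrade this to nowhere-denseness, then apply BCT) are the same. In the reverse direction you use exactly the same groupoid and the same cover $\{B_n\}$ as the paper.

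There is one small slip: your claim that every basic neighbourhood of $\gamma_x$ meets $C_n-\{x\}$, and hence $\interior(B_n)=X-C_n$, is false in general. If $x$ is isolated in $C_n$ (e.g.\ $X=\mathbb{R}$, $C_n=\{0\}$), then the basic neighbourhood $(W-\{x\})\cup\{\gamma_x\}$ with $W\cap C_n=\{x\}$ lies entirely inside $B_n$, so $\gamma_x\in\interior(B_n)$. Fortunately nothing in the verification actually needs this equality: only the inclusion $X-C_n\subseteq\interior(B_n)$ is required, and that suffices for item~\eqref{it:nb_1} (since $B_n\subseteq\overline{X-C_n}\subseteq\overline{\interior(B_n)}$), for item~\eqref{it:nb_5} (since $B_n-\interior(B_n)\subseteq B_n-(X-C_n)=\{\gamma_x:x\in C_n\}=\iso(B_n)-\go$), and for item~\eqref{it:nb_4} (which holds for any $\interior(B_n)$ because $r=s$ is an open map on $G$, making $r|_{\interior(B_n)}$ automatically a homeomorphism onto an open set). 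The paper sidesteps this by asserting only the inclusion.
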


Before we prove Theorem~\ref{main}, we establish the following two lemmas.  The first
 lemma is used to prove the second (Lemma~\ref{lem:main}); Lemma~\ref{lem:main} is a key step in our proof of Theorem~\ref{main}.

\begin{lemma}
\label{lem:1}
Suppose $G$ is a topological groupoid such that $\go$ is open in $G$, $B \subseteq G$ 
is a neighbourhood bisection and $D \subseteq B$  is closed in $B$ where $B$ is endowed with the subspace topology.  
Suppose that
$B - \interior(B) \subseteq D$.  Then
$r(D)$ is closed in $r(B)$ where $r(B)$ is endowed with the subspace topology. 
\end{lemma}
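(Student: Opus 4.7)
My plan is to show the complement $r(B) \setminus r(D)$ is open in $r(B)$. Fix $x \in r(B) \setminus r(D)$; since $r|_B$ is injective (condition~\eqref{it:nb_2} of a neighbourhood bisection), there is a unique $\gamma \in B$ with $r(\gamma) = x$, and this $\gamma$ cannot lie in $D$ (otherwise $x \in r(D)$). The first crucial move is to invoke the hypothesis $B - \interior(B) \subseteq D$: since $\gamma \notin D$, this forces $\gamma \in \interior(B)$. This single observation reduces everything to working inside the open bisection $\interior(B)$, where condition~\eqref{it:nb_4} gives us the full strength of bisection behaviour.

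Next I would unpack what it means for $D$ to be closed in $B$ with the subspace topology: there exists $V$ open in $G$ with $B \setminus D = B \cap V$. Since $\gamma \in B \setminus D$, in particular $\gamma \in V$, so $\gamma$ lies in the $G$-open set $\interior(B) \cap V$. Because $\interior(B)$ is an open bisection, $r|_{\interior(B)}$ is a homeomorphism onto $r(\interior(B))$, which is itself open in $G$ (hence open in $r(B)$). Therefore $r(\interior(B) \cap V)$ is open in $G$ and is a neighbourhood of $x$ inside $r(B)$.

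To finish, I would verify this neighbourhood avoids $r(D)$. Using the injectivity of $r|_B$ one more time: if $r(\alpha) \in r(D)$ for some $\alpha \in \interior(B) \cap V \subseteq B \setminus D$, then there is $\delta \in D$ with $r(\delta) = r(\alpha)$, and injectivity on $B$ gives $\alpha = \delta \in D$, contradicting $\alpha \in B \setminus D$. Hence $r(\interior(B) \cap V) \subseteq r(B) \setminus r(D)$, proving $x$ is interior to $r(B) \setminus r(D)$ in the subspace topology on $r(B)$.

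There is no real obstacle here; the argument is essentially a bookkeeping exercise. The only thing one has to be careful about is juggling three different topologies in parallel (the topology of $G$, the subspace topology on $B$, and the subspace topology on $r(B)$) and translating each closedness or openness statement into a statement about intersections with $G$-open sets. Once the key reduction $\gamma \in \interior(B)$ is made via the hypothesis $B - \interior(B) \subseteq D$, the open bisection structure delivers the open neighbourhood more or less for free.
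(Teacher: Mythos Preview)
Your argument is correct. The paper's proof takes a complementary route: rather than showing $r(B)\setminus r(D)$ is open pointwise, it decomposes $D=(D\cap\interior(B))\cup(B-\interior(B))$, uses injectivity of $r|_B$ to write $r(B-\interior(B))=r(B)-r(\interior(B))$ (closed in $r(B)$), uses the homeomorphism $r|_{\interior(B)}$ to see that $r(D\cap\interior(B))$ is closed in $r(\interior(B))$, and then does a short set-algebra calculation to exhibit $r(D)$ as the union of two sets closed in $r(B)$. Both proofs rely on exactly the same three ingredients (injectivity of $r|_B$, the open-bisection property of $\interior(B)$, and the hypothesis $B-\interior(B)\subseteq D$), so the difference is one of packaging: your pointwise complement argument avoids the set manipulations and makes the role of the hypothesis very visible, while the paper's decomposition gives an explicit global description of $r(D)$ as $(r(B)\cap C)\cup (r(B)-r(\interior(B)))$ for a suitable closed $C$.
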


\begin{proof}
Let $G$, $B$ and $D$ be as stated.  Then 
\[D = (D \cap \interior(B) ) \cup (B - \interior(B))\]
which means 
\[r(D) = r(D \cap \interior(B) ) \cup r(B - \interior(B)).\]
Since $r|_B$ is a bijection onto its image, $r(B - \interior(B)) = r(B) - r(\interior(B))$ which is closed in 
$r(B)$ as  $r(\interior(B))$ is open. Further $r(D \cap \interior(B))$ is closed in 
$r(\interior(B))$ because $r$ restricted to $\interior(B)$ is a 
homeomorphism.    Thus there exists a closed set $C$  such that 
$r(\interior(B)) \cap C = r(\interior(B) \cap D)$. Therefore 
\begin{align*}
r(D) &= r(\interior(B) \cap D) \cup r(B - \interior(B))\\
&= (r(\interior(B)) \cap C) \cup (r(B - \interior(B)) \cap C) \cup r(B - \interior(B))\\
&= (r(B) \cap C) \cup r(B - \interior(B))
\end{align*}
which is closed in $r(B)$. 
                                                             
\end{proof}
\break
\begin{lemma}
\label{lem:main}
 Suppose $G$ is an effective groupoid such that $\go$ is open in $G$ and $B$ is a 
neighbourhood bisection. Then
 \begin{enumerate}
 \item \label{it: empty nbhd}  $r(\iso(B) - \go)$ has empty interior, and
 \item  \label{it: empty cls} $\overline{r(\iso(B) - \go)}$  has empty interior.
 \end{enumerate}
\end{lemma}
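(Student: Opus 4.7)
My plan is to prove part (1) by a direct contradiction argument using effectiveness and the density property $B\subseteq\overline{\interior(B)}$, and then to deduce part (2) by first showing that $r(\iso(B)-\go)$ is relatively closed in the open set $r(B)$ (via Lemma~\ref{lem:1}) and finally invoking the elementary fact that an open set contained in the closure of another open set must meet the latter.

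For part (1), suppose for contradiction that $U$ is a nonempty open subset of $\go$ with $U\subseteq r(\iso(B)-\go)$. I would consider the open set $V:=r^{-1}(U)\cap\interior(B)$. To see $V$ is nonempty, note that $U\subseteq r(B)$ implies $r^{-1}(U)\cap B\neq\emptyset$; any $\gamma$ in this intersection lies in $\overline{\interior(B)}$ by property~\eqref{it:nb_1}, so the open neighbourhood $r^{-1}(U)$ of $\gamma$ must meet $\interior(B)$. For any $\eta\in V$, $\eta\in B$ and $r(\eta)\in U\subseteq r(\iso(B)-\go)$, so injectivity of $r|_B$ forces $\eta\in\iso(B)-\go\subseteq\iso(G)-\go$. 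Thus $V$ is a nonempty open subset of $\iso(G)-\go$, contradicting effectiveness of $G$.

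For part (2), the key preliminary step is to show $r(\iso(B)-\go)$ is closed in $r(B)$ with the subspace topology. Working in the setting of our applications we may assume $\go$ is Hausdorff, so $\iso(G)$ is closed in $G$ and hence $\iso(B)=\iso(G)\cap B$ is closed in $B$. I would then apply Lemma~\ref{lem:1} with $D=\iso(B)$ to conclude that $r(\iso(B))$ is closed in $r(B)$. Property~\eqref{it:nb_5} implies $B-\interior(B)$ is disjoint from $\go$, so $B\cap\go=\interior(B)\cap\go$ is open in $G$; combined with $\go\subseteq\iso(G)$ and injectivity of $r|_B$, one obtains the disjoint decomposition $r(\iso(B))=r(\iso(B)-\go)\sqcup(B\cap\go)$. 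Hence $r(\iso(B)-\go)=r(\iso(B))\setminus(B\cap\go)$, the difference of a set closed in $r(B)$ and an open set, and so is closed in $r(B)$.

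With the closedness in hand, suppose for contradiction that $W$ is a nonempty open subset of $\go$ contained in $\overline{r(\iso(B)-\go)}$. If $W\cap r(B)\neq\emptyset$, then $W\cap r(B)$ is a nonempty open subset of $r(B)$ contained in $\overline{r(\iso(B)-\go)}\cap r(B)=r(\iso(B)-\go)$ (by the closedness just proved), contradicting part (1). Otherwise $W\subseteq\go\setminus r(B)$, while simultaneously $W\subseteq\overline{r(\iso(B)-\go)}\subseteq\overline{r(B)}$; but any nonempty open subset of $\overline{r(B)}$ must meet $r(B)$ (each of its points is in $\overline{r(B)}$ and has $W$ as an open neighbourhood), a contradiction.

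I expect the main obstacle to be the closedness step for part (2), and in particular the observation (via property~\eqref{it:nb_5}) that $B\cap\go\subseteq\interior(B)$, which is what makes $B\cap\go$ open in $G$ so that the subtraction $r(\iso(B))\setminus(B\cap\go)$ preserves relative closedness in $r(B)$.
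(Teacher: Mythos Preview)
Your proof is correct and follows essentially the same route as the paper's: part~(1) via the density $B\subseteq\overline{\interior(B)}$, injectivity of $r|_B$, and effectiveness; part~(2) by showing $r(\iso(B)-\go)$ is closed in $r(B)$ via Lemma~\ref{lem:1} and then reducing to part~(1). The only notable difference is that the paper applies Lemma~\ref{lem:1} directly with $D=\iso(B)-\go$ (this $D$ is closed in $B$ since $\go$ is open and $\iso(B)$ is closed in $B$, and it contains $B-\interior(B)$ by property~\eqref{it:nb_5}), so your detour through $r(\iso(B))$ and the subtraction of the open set $B\cap\go$ is unnecessary.
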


\begin{proof}

For \eqref{it: empty nbhd}, by way of contradiction, suppose
there exists  a nonempty open  set $W \subseteq r(\iso(B) - \go)$.  
Thus $W \cap r(B) \neq \emptyset$, and since $ B \subseteq \overline{\interior(B)}$, we have $ 
W \cap r\left(\overline{\interior(B)}\right) \neq \emptyset$.  Therefore
\[ W \cap \overline{r(\interior(B))} \neq \emptyset 
\text{  because  $r\left(\overline{\interior(B)}\right) \subseteq \overline{r(\interior(B))}$.} \]
Hence $W \cap r(\interior(B))$ is a nonempty open set contained in $\go$.  Since 
\[
\phi:=r|_{\interior(B)}
\] is a homeomorphism, 
\[\phi^{-1}(W \cap r(\interior(B)))\]
is a nonempty open subset of $\interior(B)$ and thus is open in $G$.
Since  $r$ is injective on $B$ and $W \subseteq r(\iso(B) - \go)$,
\[(\phi^{-1}(W \cap r(\interior(B))) \subseteq \iso(B) - \go \subseteq \iso(G) - \go.\]
This is a contradiction because $G$ is effective.

For \eqref{it: empty cls}, by way of contradiction, assume there exists a nonempty open subset 
\[V \subseteq  \overline{r(\iso(B) - \go)}.\] Notice
that $V \cap r(B)$ is a nonempty open subset of $\go$.
Further,
\[
V \cap r(B) \subseteq \overline{r(\iso(B) - \go)} \cap r(B).
\]
 
We show $\overline{r(\iso(B) - \go)} \cap r(B)=r(\iso(B) - \go).$
Since $\iso(B)$ is closed in $B$ and $\go$ is open,  $\iso(B) - \go$ is also closed in $B$.  Also,
$B - \interior(B) \subseteq \iso(B) - \go$ by assumption.  Therefore we apply Lemma~\ref{lem:1} to see that
$r(\iso(B) - \go)$ is closed in $r(B)$.  
Thus 
\begin{align*}
r(\iso(B) - \go) &= \overline{r(\iso(B) - \go)} \cap r(B), \text{ and so }\\
V \cap r(B) &\subseteq r(\iso(B) - \go)
\end{align*}
which contradicts item \eqref{it: empty nbhd}.  
\end{proof}

\begin{proof}[Proof of Theorem~\ref{main}:]
Suppose Theorem~\ref{BCT} holds.  Let $G$ be a topological groupoid with a countable 
cover of neighbourhood bisections  $\{ B_n\}$  such that $G^{(0)}$ is a complete metric space.
Suppose also that $G$ is effective.

By Lemma~\ref{lem:main}\eqref{it: empty cls}, the set $\overline{r(\iso(B_n)-\go)}$ 
has empty interior for every $n$.    Define $C_n:= \overline{r(\iso(B_n)-\go)} \cap \go$ for each $n$.  
Notice
that each $C_n$ is a closed 
subset of $\go$.  Because $\go$ is open in $G$,  each $C_n$ also has empty interior in $\go$.  
Applying Theorem~\ref{BCT} (Baire Category Theorem) to the collection $\{C_n\}$ we see that
\[C:= \bigcup_n C_n
 \]
 has empty interior.  By construction, 
$C$ contains the units with nontrivial isotropy.  Therefore, $G$ is topologically principal.

Conversely, suppose that Theorem~\ref{GET} holds.  Let $X$ be a complete metric space with topology $\Tt$ 
and $\{C_n\}$ be a countable
collection of closed subsets of $X$, each with empty interior. With out loss of generality we can assume $C_0=\emptyset$.  Let $C=\bigcup_n C_n$.  
Define $G$ as in Example~\ref{ex: main}.  Since $\go=X$ as a topological space, $G$ satisfies \eqref{it:GET_1} of Theorem~\ref{GET}.  

For each $n$, define 
\[B_n:=\left(X-C_n\right) \cup \{\gamma_x : x\in C_n\}.  \]
We claim that each $B_n$ is a neighbourhood bisection.  To prove this, we must check
each of the items in Definition~\ref{def:nb}.    
To see \eqref{it:nb_1}, first note that $X- C_n$ is open in $G$ and contained in $B_n$.  
Thus $X- C_n\subseteq \interior(B_n)$.  We show that $B_n\subseteq \overline{X- C_n}\subseteq \overline{\interior(B_n)}$.    
Consider $\gamma_x$ for some $x\in C_n$.  For every $V\in \Bb$ with $\gamma_x\in V$ we have 
$V=W-\{x\}\cup \{\gamma_x\}$ where $W\in \Tt$ and $x\in W$.  
Now $V\cap (X- C_n)=W\cap (X- C_n)$ is nonempty because $C_n$ has empty interior.  
Therefore $\gamma_x\in \overline{X- C_n}$.  Since $B_n=X- C_n\cup \{\gamma_x: x\in C_n\}$, 
we have $B_n\subseteq \overline{X- C_n}$.  
That $B_n$ satisfies item~\eqref{it:nb_2} is clear and  $r(B_n)=X=s(B_n)$ is open in $G$ 
giving us item~\eqref{it:nb_3}.
Since $r(V)$ is  in $\Tt$ for every $V \in \Bb$,  $r=s$ is an open map and hence  $r|_{\interior(B_n)}=s|_{\interior(B_n)}$ 
is a homeomorphism with open image giving item~\eqref{it:nb_4}. 
Lastly, since $B_n \cap \go = X- C_n \subseteq \interior(B_n)$ and $G = \iso(G)$, we 
get item~\eqref{it:nb_5}. Thus $\{B_n\}_n$
is a countable cover of $G$ by neighbourhood bisections and $G$ satisfies
item (\ref{it:GET_2}) of Theorem~\ref{GET}.   

We showed in Example~\ref{ex: main} that $G$ is effective.  Therefore 
$G$ is topologically principal by Theorem~\ref{GET}.   Thus 
\[
 C = \{x\in \go: xGx\ \neq \{x\}\}
\]
has empty interior proving Theorem~\ref{BCT}.
\end{proof}

\begin{ack} Thanks to Fran\c{c}ois Dorais for the very helpful email correspondence.\end{ack}


\end{document}